\documentclass[12pt]{article}
\usepackage{authblk}
\usepackage{pgf,tikz,pgfplots}
\pgfplotsset{compat=1.15}
\usepackage{mathrsfs}
\usetikzlibrary{arrows}
\usepackage{lineno}
\parskip=5pt
\usepackage{amsmath,amssymb}
\usepackage{amsthm} 
\pagestyle{headings} \setlength{\textwidth}{6in}
\setlength{\textheight}{8.5in} \setlength{\oddsidemargin}{0.3in}
\usepackage{latexsym}
\usepackage{mathtools}
\usepackage{hyperref}
\newtheorem{theorem}{Theorem}[section]
\newtheorem{proposition}[theorem]{Proposition}
\newtheorem{lemma}[theorem]{Lemma}
\newtheorem{corollary}[theorem]{Corollary}

\newtheorem{definition}[theorem]{Definition}

\setlength\parindent{0pt}
\title{\bf A super-multiplicative inequality for the number of finite unlabeled arbitrary and $T_0$ topologies}

\author[1,2]{Ibtsam A. R. Alroily}
\author[1,*]{Brahim Chaourar}
\affil[1]{Department of Mathematics and Statistics, College of Science,\par Imam Mohammad Ibn Saud Islamic University (IMSIU), Riyadh, Saudi Arabia}
\affil[2]{Mathematics Department, College of Science,\par Jouf University, P.O. Box 2014, Sakaka, Saudi Arabia, earowily@ju.edu.sa}
\affil[*]{correspondence email: imchaourar@imamu.edu.sa}

\begin{document}
\renewcommand{\Affilfont}{\normalfont\small}
\date{}
\maketitle
\begin{abstract}
\noindent Let $n$ be a nonnegative integer, and $f(n)$ the number of unlabeled finite topologies on $n$ points. We prove that $f(n+m) \geq f(n) f(m)$ both for the labeled and unlabeled cases. Moreover, we prove a similar inequality for labeled and unlabeled $T_0$ topologies.
\end{abstract}

\noindent {\bf2010 Mathematics Subject Classification:} Primary 05A20, 54B15; Secondary 05A15.
\newline {\bf Key words and phrases:} finite topology; $T_0$ topology; number of unlabeled topologies; inequalities; growth rate.
\section{Introduction}

Finite topological spaces raise interesting combinatorial questions, most notably the number $T(n)$ of distinct topologies on $n$ points. Exhaustive enumeration has established $T(n)$ for $n\leq 18$ \cite{BrinkmannMcKay:2005,oeis}, yet a general formula remains far for reaching.
\\ The enumeration can be further refined by counting $T(n,k)$, the number of topologies on $n$ points with $k$ open sets. This remains a long-standing open problem, though some known cases exist. Key contributions come from Ern\'e and Stege, who computed $T(n,k)$ for $n\leq 11$ and arbitrary $k$ in \cite{ErneStege:1990}, including related counts for $T_0$, connected topologies, and homeomorphism classes. Their results yielded all $T(n,k)$ for $k\leq 23$ \cite{ErneStege:1991}.
\\ Asymptotic behavior is complex. Finding a simple closed-form expression or a straightforward asymptotic formula for $T(n)$ has proven to be quite challenging. However, there are some known bounds and asymptotic estimates. For instance, it is known that $T(n)$ grows faster than $2^{\binom{n}{2}}$.
\\ However, very little is known about the growth behaviour of $T(n)$. Super-multiplicative inequalities involving $T(n+m)$, $T(n)$, and $T(m)$, for any nonnegative integers $n$ and $m$, are also important for this question. In another field of combinatorics, matroid theory, a famous conjecture is that: $f(n+m)\geq f(n) f(m)$, where $f(n)$ is the number of non-isomorphic matroids on $n$-element set \cite{Welsh:1969} (see \cite[p. 594]{Oxley:2011}). It resisted three decades \cite{CrapoSchmitt:2005, Lemos:2004}. If we prove a similar inequality for the number of unlabeled finite topologies on $n$ points, it would reveal some interesting structural properties about how topologies on disjoint sets combine to form topologies on their union. We can then get lower bounds on the growth of $f(n)$. For example, since $f(2) = 3$ \cite{oeis}, $f(n) = f(2 + (n-2)) \geq f(2) f(n-2) = 3 f(n-2)$, suggesting at least exponential growth with a base related to $\sqrt{3}$. Such inequalities could be a stepping stone towards a more precise understanding of the asymptotic growth rate. In addition of the analogy with matroid theory, this kind of inequality provides recursive approximation via maximum product. Super-multiplicativity enables recursive lower bounds:
\[
f(n) \geq \max_{1 \leq k \leq n-1} \left\{ f(k) \cdot f(n-k) \right\}.
\]
This formulation is useful for estimating \( f(n) \) when exact enumeration is difficult, and it reflects the idea that the most prolific decompositions dominate the growth.
\\ Let $\tau_i$, $i = 1, 2$, be two topologies defined on finite and disjoint sets $E_i$. The direct sum $\tau = \tau_1 \oplus \tau_2$ is the collection $\{ O_1 \cup O_2$ with $(O_1, O_2) \in \tau_1 \times \tau_2 \}$. A straightforward consequence is that the direct sum defines a topology on the disjoint union $E_1 \cup E_2$. For the labeled case, it is not difficult to see that, by using the direct sum, we achieve our super-multiplicative inequality. However, in the unlabeled case, we cannot prove it by using the direct sum. Indeed, if $n = m$, then we get $f(2 n) \geq \frac{1}{2} [f^2(n) + f(n)]$ by using this naive operation because it is commutative: if $\tau_i$, $i = 1, 2$, are two non-homeomorphic finite topologies, then $\tau_1 \oplus \tau_2 = \tau_2 \oplus \tau_1$. This is far from what we need to prove in this case: $f(2 n) \geq f^2(n)$. So, the direct sum fails here and a more elaborated (non-commutative) operation should be used for this purpose. It is what we call the $w$-sum. This operation is introduced in the coming section.
\\ Finite topologies have found concrete applications across a range of scientific and technological domains. In chemistry, they support the analysis of molecular graphs and topological indices used to predict chemical properties \cite{MerrifieldSimmons:1989}. In image analysis, finite topological spaces provide a rigorous framework for modeling pixel connectivity and digital surfaces \cite{Kovalevsky:1992}. In automata theory, they aid in the classification and minimization of state-transition systems, with implications for control and automation \cite{SinghMahato:2023}. Beyond these, finite topologies play a role in robotics, where they inform configuration space analysis and motion planning \cite{Latombe:1991}; in geographic information systems (GIS), where they model spatial relationships and adjacency structures \cite{Egenhofer:1991}; and in computer science, particularly in topological data analysis (TDA), where they contribute to the study of shape and connectivity in high-dimensional data \cite{Carlsson:2009}. These applications intersect with industrial sectors such as petrochemicals, electronics, automation, geospatial technology, and data science.
\\ The rest of the paper is organized as follows. In Section 2, we introduce the $w$-sum and prove some of its properties. In Section 3, we prove the main result. The last section is devoted to conclusions and further directions.

\section{The $w$-sum operation}

\begin{definition}\label{Topo}
  Let $n$ be a nonnegative integer, $E$ a finite set of cardinality $n$, and $\tau$ a collection of subsets of $E$. We say that $\tau$ is a topology on $E$, or on $n$ points, if:
  \\ (w1) $\O , E \in \tau$;
  \\ (w2) $\tau$ is closed under union, that is, if $A, B \in \tau$, then $A \cup B \in \tau$;
  \\ (w3) $\tau$ is closed under intersection, that is, if $A, B \in \tau$, then $A \cap B \in \tau$.
  \\ Moreover, the size of $\tau$ is $n$, $E$ is called its ground set, and its members are called open sets.
\end{definition}

Let us recall what we mean by a topologies homeomorphism.

\begin{definition}\label{TopoIso}
  Let $\tau_i$, $i = 1, 2$, be two topologies defined on finite sets $E_i$, respectively, and $\varphi$ a mapping from $E_1$ to $E_2$. We say that $\varphi$ is an homeomorphism from $\tau_1$ to $\tau_2$ if:
  \\ (1) $\varphi$ is bijective;
  \\ (2) $Y = \varphi (X) \in \tau_2$ if and only if $X \in \tau_1$.
  \\ In this case, we say that $X$ and $Y$ are homeomorphic, and $\tau_1$ and $\tau_2$ are homeomorphic.
  \\ In general, two topologies are homeomorphic if such homeomorphism exists.
  \\ The class of unlabeled topologies is the class of non-homeomorphic topologies.
\end{definition}

A direct consequence of the above definition is

\begin{proposition}\label{cardHomeo}
  Two homeomorphic open sets have the same cardinality. Moreover, homeomorphisms keep inclusion (order), intersection, union, and complement.
\end{proposition}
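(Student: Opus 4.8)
The plan is to derive every assertion directly from clause (1) of Definition \ref{TopoIso}, namely that a homeomorphism $\varphi\colon E_1\to E_2$ is a bijection; the topology-compatibility clause (2) is not needed for this proposition beyond the fact that it makes the inverse map a homeomorphism too. So the first step I would record is the elementary observation that $\varphi^{-1}\colon E_2\to E_1$ is again a homeomorphism, which is immediate from the symmetric ``if and only if'' form of condition (2). This lets me upgrade each one-sided inclusion below to an equality by applying $\varphi$ or $\varphi^{-1}$ to both sides.

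For the cardinality statement, I would note that for any $X\subseteq E_1$ the restriction $\varphi|_X\colon X\to\varphi(X)$ is surjective by definition of the image and injective because $\varphi$ is, hence a bijection; therefore $|X|=|\varphi(X)|$, and in particular homeomorphic open sets have the same size.

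For the order/intersection/union/complement claims I would invoke the standard image identities, being deliberate about which ones require injectivity or surjectivity. Monotonicity $X\subseteq X'\Rightarrow\varphi(X)\subseteq\varphi(X')$ and the union identity $\varphi(X\cup X')=\varphi(X)\cup\varphi(X')$ hold for arbitrary maps; the reverse implication $\varphi(X)\subseteq\varphi(X')\Rightarrow X\subseteq X'$ and the intersection identity $\varphi(X\cap X')=\varphi(X)\cap\varphi(X')$ use injectivity of $\varphi$ (equivalently, apply $\varphi^{-1}$); and the complement identity $\varphi(E_1\setminus X)=E_2\setminus\varphi(X)$ uses that $\varphi$ is onto as well. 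Combining these yields the proposition.

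Since each step is a textbook fact about bijections, I do not anticipate a genuine obstacle; the only point worth a sentence of care is that preservation of intersections and of complements really does need full bijectivity rather than injectivity or surjectivity alone, so I would make the appeal to clause (1) of Definition \ref{TopoIso} explicit at exactly those two places.
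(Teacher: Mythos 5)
Your proof is correct and matches the paper's treatment: the paper states this proposition without proof, introducing it as ``a direct consequence of the above definition,'' and your argument simply supplies the standard details (bijectivity of the restriction for cardinality, the usual image identities for order, union, intersection, and complement) that the authors leave implicit. One small internal inconsistency worth fixing: in your final paragraph you say preservation of intersections needs full bijectivity, whereas (as you correctly state earlier) injectivity alone suffices for $\varphi(X\cap X')=\varphi(X)\cap\varphi(X')$; only the complement identity genuinely uses surjectivity as well.
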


We denote by $T(n)$ and $f(n)$ the number of labeled and unlabeled topologies on $n$ points, respectively. While the corresponding class for unlabeled topologies is denoted by $\mathcal T(n)$.

\begin{definition}\label{shiftTopo}
  Let $\tau$ be a finite topology defined on $E$, $X \in \tau$, and $Y$ a finite set that is disjoint from $E$.
  \\ (1) The intersection-topology $\tau \cap X$ is the collection $\{ O \cap X$ for all $O \in \tau \}$.
  \\ (2) The shift topology $\tau - X$ is the collection $\{ O \backslash X$ for all $O \in \tau \}$.
  \\ (3) The inverse shift topology $\tau + Y$ is the collection $\{ O \cup Y$ for all $O \in \tau \}$.
\end{definition}

We recall the notion of co-topology.

\begin{definition}
  Let $\tau$ be a finite topology defined on $E$. Its co-topology denoted $\tau^C$ is the collection $\{ E \backslash X$ for all $X \in \tau \}$.
\end{definition}

It is evident that a co-topology is again a topology defined on the same ground set. From the above definitions, we deduce the following properties.

\begin{lemma}\label{subWtopo}
  Let $\tau$ be a finite topology defined on $E$, $X \in \tau$, and $Y$ a finite set that is disjoint from $E$. Then
  \\ (i) $\tau \cap X$ is a finite topology on $X$.
  \\ (ii) $\tau - X = [\tau^C \cap (E \backslash X)]^C$.
  \\ (iii) $\tau - X$ is a finite topology on $E \backslash X$.
  \\ (iv) $\tau + Y$ is closed under union and intersection.
\end{lemma}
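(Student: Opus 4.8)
The plan is to verify each of the four items directly from the definitions, using only elementary set algebra (distributivity of $\cap$ over $\cup$ and De Morgan's laws) together with the two facts already recorded above: that $\tau^C$ is a topology on $E$, and that the co-topology of a topology is again a topology on the same ground set. For (i), I would check the axioms (w1)--(w3) for $\tau\cap X=\{O\cap X:O\in\tau\}$ on the ground set $X$. Since $\emptyset,E\in\tau$ we get $\emptyset=\emptyset\cap X$ and $X=E\cap X$ in $\tau\cap X$, which is (w1); for (w2) and (w3), distributivity gives $(O_1\cap X)\cup(O_2\cap X)=(O_1\cup O_2)\cap X$ and $(O_1\cap X)\cap(O_2\cap X)=(O_1\cap O_2)\cap X$, and $O_1\cup O_2,\ O_1\cap O_2\in\tau$ by (w2) and (w3) for $\tau$.

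For (ii), the key is to keep track of which ground set each complement is taken relative to. Writing $\tau^C=\{E\setminus O:O\in\tau\}$, De Morgan gives $\tau^C\cap(E\setminus X)=\{(E\setminus O)\cap(E\setminus X):O\in\tau\}=\{E\setminus(O\cup X):O\in\tau\}$. Since $X\in\tau$ we have $E\setminus X\in\tau^C$, so by (i) this is a topology on $E\setminus X$; hence its co-topology, taken relative to $E\setminus X$, is $\{(E\setminus X)\setminus(E\setminus(O\cup X)):O\in\tau\}$. The short computation $(E\setminus X)\setminus(E\setminus(O\cup X))=(E\setminus X)\cap(O\cup X)=O\setminus X$ (the last equality because $O\subseteq E$, so intersecting with $E\setminus X$ discards the $X$-part) shows this equals $\{O\setminus X:O\in\tau\}=\tau-X$, which is (ii).

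For (iii), I would simply chain the previous observations: $E\setminus X\in\tau^C$ and $\tau^C$ is a topology on $E$, so $\tau^C\cap(E\setminus X)$ is a topology on $E\setminus X$ by (i); the co-topology of a topology on $E\setminus X$ is again a topology on $E\setminus X$; and by (ii) this co-topology is exactly $\tau-X$. For (iv), with $\tau+Y=\{O\cup Y:O\in\tau\}$ and $Y\cap E=\emptyset$ (so $O\cap Y=\emptyset$ for every $O\in\tau$), distributivity gives $(O_1\cup Y)\cup(O_2\cup Y)=(O_1\cup O_2)\cup Y$ and $(O_1\cup Y)\cap(O_2\cup Y)=(O_1\cap O_2)\cup Y$, the cross terms $O_i\cap Y$ vanishing; since $O_1\cup O_2,\ O_1\cap O_2\in\tau$, the collection $\tau+Y$ is closed under union and intersection. (One does not claim more: $\emptyset\notin\tau+Y$ once $Y\neq\emptyset$, so $\tau+Y$ need not be a topology on $E\cup Y$.)

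The only step that requires genuine care is (ii), where every complementation must be read relative to the correct ground set; once (ii) is in hand, (iii) is immediate, which is cleaner than checking the topology axioms for $\tau-X$ by hand. All remaining verifications are one-line applications of distributivity, so I expect no real obstacle beyond the bookkeeping in (ii).
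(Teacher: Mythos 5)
Your proof is correct and follows essentially the same route as the paper: direct verification of the axioms for (i) and (iv), the same ground-set-relative complementation identity for (ii), and the same chaining of (i), (ii), and the co-topology fact for (iii). Your version is in fact slightly more careful than the paper's in making explicit that $E\setminus X\in\tau^C$ is what licenses applying (i) to $\tau^C$ in step (iii).
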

\begin{proof}\
  \\ (i) Let $A, B \in \tau \cap X$. It follows that $A, B \subseteq X$. This yields $A \cup B, A \cap B \subseteq X$. Since $\O, X \in \tau (X) \subseteq \tau$, $\tau \cap X$ is a topology on $X$. This is what we request.
  \\ (ii) Let $Y \in \tau - X$, that is, $Y = O \backslash X$, for some $O \in \tau$. Thus, $(E \backslash X) \backslash Y = (E \backslash X) \cap (E \backslash O) \in (E \backslash X) \cap \tau^C = \tau^C \cap (E \backslash X)$. In other words, $Y \in [\tau^C \cap (E \backslash X)]^C$, and vice versa.
  \\ (iii) Since a co-topology is also a topology on the same ground set and according to (i)-(ii), $\tau - X$ is a topology on $E \backslash X$.
  \\ (iv) Similarly as for (i), we can prove that $\tau + Y$ is closed under union and intersection. Its smallest member is $Y$, while its larger one is $E \cup Y$.
\end{proof}

\begin{definition}\label{T0Topo}
  We say that a topology $\tau$ defined on $E$ is a $T_0$ one, if, in addition, it satisfies the following condition:
\\ (t0) For any two distinct points $x, y \in E$, there exists an open set $A \in \tau$, such that $|\{ x, y \} \cap A| = 1$.
\end{definition}

The numbers of unlabeled and labeled $T_0$-topologies on $n$ points are denoted by: $f_{0} (n)$, and $T_{0} (n)$, respectively.
\\ We have analogous results as for Lemma \ref{subWtopo}.

\begin{lemma}\label{subT0topo}
  Let $\tau$ be a $T_0$ topology defined on $E$, $X \in \tau$, and $Y$ a finite set that is disjoint from $E$. Then
  \\ (i) $\tau \cap X$ is a $T_0$ topology.
  \\ (ii) $\tau^C$ is a $T_0$ topology.
  \\ (iii) $\tau - X$ is a $T_0$ topology.
\end{lemma}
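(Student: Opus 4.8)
The plan is to treat the three parts in the order stated, since (iii) can be bootstrapped from (i) and (ii). In every case Lemma \ref{subWtopo} (together with the fact, already noted, that a co-topology is a topology on the same ground set) hands us the conclusion that the collection in question is a topology; so the only thing I would need to verify is the separation axiom (t0). The common mechanism will be that a separating open set for $\tau$ transforms, under the relevant operation, into a separating open set for the new topology, provided the pair of points under consideration lies in the new ground set.

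For (i): given distinct $x,y \in X$, choose $A \in \tau$ with $|\{x,y\} \cap A| = 1$. Then $A \cap X \in \tau \cap X$, and since $\{x,y\} \subseteq X$ one has $\{x,y\} \cap (A \cap X) = \{x,y\} \cap A$, so the count is still $1$. For (ii): given distinct $x,y \in E$, choose $A \in \tau$ with $|\{x,y\} \cap A| = 1$; then $E \backslash A \in \tau^C$ and $|\{x,y\} \cap (E \backslash A)| = 2 - |\{x,y\} \cap A| = 1$, so complementation sends a separating set to a separating set. For (iii): I would argue directly, namely that for distinct $x,y \in E \backslash X$ and $A \in \tau$ separating them, $A \backslash X \in \tau - X$ and, since $x,y \notin X$, $\{x,y\} \cap (A \backslash X) = \{x,y\} \cap A$ still has size $1$; alternatively, one can combine (i) and (ii) with the identity $\tau - X = [\tau^C \cap (E \backslash X)]^C$ from Lemma \ref{subWtopo}(ii), using that $E \backslash X \in \tau^C$ because $X \in \tau$.

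The main obstacle here is essentially only bookkeeping: one must be careful to check that the two points being separated actually belong to the ground set of the new topology ($X$ in case (i), $E \backslash X$ in case (iii)), so that intersecting with $X$ or deleting $X$ does not disturb $\{x,y\}$ and the separating open set carries over verbatim. Beyond that observation, no idea is needed that is not already present in the constructions of Lemma \ref{subWtopo}.
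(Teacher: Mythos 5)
Your proposal is correct and follows essentially the same route as the paper: parts (i) and (ii) are verified by showing that a separating open set $A$ transforms into $A \cap X$ (unchanged on $\{x,y\} \subseteq X$) and $E \backslash A$ (with $|\{x,y\} \cap (E\backslash A)| = 2 - 1 = 1$), respectively, and (iii) follows by combining (i), (ii) and the identity $\tau - X = [\tau^C \cap (E \backslash X)]^C$ of Lemma \ref{subWtopo}(ii). Your additional direct argument for (iii) (that $A \backslash X$ still separates $x,y \in E \backslash X$ since neither point lies in $X$) is a valid, slightly more self-contained variant, but it does not change the substance of the proof.
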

\begin{proof}\
  \\ (i) Let $x, y \in X \subseteq E$ be two distinct points. It follows that there exists an open set $A \in \tau$, such that $|\{ x, y \} \cap A| = 1$. This yields $A \cap X \in \tau \cap X$. Furthermore, $|\{ x, y \} \cap (A \cap X)| = |(\{ x, y \} \cap X) \cap A| = |\{ x, y \} \cap A| = 1$. This is what we request.
  \\ (ii) Let $x, y \in E$ be two distinct points. It follows that there exits an open set $A \in \tau$, such that $|\{ x, y \} \cap A| = 1$. This means that $|\{ x, y \} \cap (E \backslash A)| = 1$ because $|\{ x, y \} \cap E| = 2$. Since $E \backslash A \in \tau^C$, we are done.
  \\ (iii) Combining (i)-(ii) of the current lemma with (ii) of Lemma \ref{subWtopo} imply the result.
\end{proof}

Now, we define the $w$-sum of two finite topologies.

\begin{definition}\label{wSum}
  Let $E_i$, $i = 1, 2$, be two disjoint finite sets, and $\tau_i$ two topologies defined on $E_i$, with $|E_i| = n_i$, respectively. The $w$-sum of $\tau_1$ and $\tau_2$ is

  $$ \tau_1 \oplus_w \tau_2 = \tau_1 \cup (\tau_2 + E_1) .$$

\end{definition}

It is clear that the $w$-sum is not commutative in general. Furthermore,

\begin{proposition}\label{taui}\
  \\ If $\tau = \tau_1 \oplus_w \tau_2$ is a topology, then $\tau_1 = \tau \cap E_1$, and $\tau_2 = \tau - E_1$.
\end{proposition}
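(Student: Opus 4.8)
The plan is to verify the two claimed identities directly from the definition of the $w$-sum, using the hypothesis that $\tau=\tau_1\oplus_w\tau_2=\tau_1\cup(\tau_2+E_1)$ is actually a topology on $E_1\cup E_2$. The key structural observation is that every member of $\tau_1$ is a subset of $E_1$, while every member of $\tau_2+E_1$ contains $E_1$ (since it has the form $O\cup E_1$ with $O\in\tau_2$); in particular the two families $\tau_1$ and $\tau_2+E_1$ overlap only in $E_1$ itself, which lies in both. I would record this at the outset, since both identities hinge on it.

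For the first identity, $\tau\cap E_1=\{O\cap E_1:O\in\tau\}$ by Definition \ref{shiftTopo}. Splitting according to whether $O\in\tau_1$ or $O\in\tau_2+E_1$: if $O\in\tau_1$ then $O\subseteq E_1$ so $O\cap E_1=O$, giving $\tau_1\subseteq\tau\cap E_1$; if $O=P\cup E_1$ with $P\in\tau_2$ (and $P\subseteq E_2$, disjoint from $E_1$), then $O\cap E_1=E_1\in\tau_1$. Hence $\tau\cap E_1\subseteq\tau_1$, and the two are equal. For the second identity, $\tau-E_1=\{O\setminus E_1:O\in\tau\}$. If $O\in\tau_1$ then $O\setminus E_1=\emptyset$, and $\emptyset=\emptyset\setminus E_1$ arises also from $\emptyset\cup E_1$-type considerations, but more to the point $\emptyset\in\tau_2$ so $\emptyset\in\tau_2+E_1$ with $\emptyset\cup E_1$... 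I would instead note that $\emptyset\in\tau_2$ gives $\emptyset\in\tau-E_1$ trivially since $\emptyset = E_1\setminus E_1$ and $E_1\in\tau$. If $O=P\cup E_1$ with $P\in\tau_2$, then $O\setminus E_1=P$ since $P$ and $E_1$ are disjoint, so $\tau_2\subseteq\tau-E_1$; conversely every element of $\tau-E_1$ is either $\emptyset$ (from some $O\in\tau_1$) or $P\in\tau_2$ (from $O=P\cup E_1$), and $\emptyset\in\tau_2$ because $\tau_2$ is a topology. Thus $\tau-E_1=\tau_2$.

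I do not expect a serious obstacle here; the statement is essentially bookkeeping once the containment/disjointness dichotomy between $\tau_1$ and $\tau_2+E_1$ is made explicit. The one point that needs a little care is handling $\emptyset$ and $E_1$, which are the elements lying in the overlap of the two constituent families (indeed $E_1\in\tau_1$ and $E_1=\emptyset\cup E_1\in\tau_2+E_1$, while $\emptyset\in\tau_1$), so that the two case analyses genuinely cover everything without double-counting causing trouble. It is also worth remarking that the hypothesis ``$\tau$ is a topology'' is used only implicitly, in that $\tau\cap E_1$ and $\tau-E_1$ are well-defined topologies via Lemma \ref{subWtopo}; the identities themselves are purely set-theoretic and would hold for the raw collection $\tau_1\cup(\tau_2+E_1)$ regardless.
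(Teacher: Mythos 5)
Your proposal is correct and follows essentially the same route as the paper: a direct unwinding of the definitions, showing $\tau_1\subseteq\tau\cap E_1$ and $\tau_2\subseteq\tau-E_1$ and then the reverse inclusions. The only difference is that the paper compresses the reverse inclusions into a ``vice versa,'' whereas you spell out the case analysis (elements of $\tau_2+E_1$ intersect $E_1$ in exactly $E_1\in\tau_1$, and elements of $\tau_1$ shift to $\emptyset\in\tau_2$), which is exactly the bookkeeping the paper leaves implicit.
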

\begin{proof}\
  \\ Let $A \in \tau_1 \subseteq \tau$. This means that $A \subseteq E_1$, i.e., $A \in \tau \cap E_1$, and vice versa. For $B \in \tau_2$, we have, $B \cup E_1 \in \tau_2 + E_1 \subseteq \tau$, i.e., $B = (B \cup E_1) \backslash E_1 \in \tau - E_1$, and vice versa.
\end{proof}

\begin{lemma}\label{sumw}
  Let $\tau_i$, $i = 1, 2$, be two finite topologies defined on two disjoint sets, and $\tau = \tau_1 \oplus_w \tau_2$ a topology. Then the following assertions are equivalent.
  \\ (i) $\tau$ is a $T_0$ topology.
  \\ (ii) $\tau_1$ and $\tau_2$ are $T_0$ topologies.
\end{lemma}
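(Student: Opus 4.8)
The plan is to prove the two implications separately, using the explicit description $\tau = \tau_1 \cup (\tau_2 + E_1)$ and the structural facts already established, namely Proposition~\ref{taui} (which recovers $\tau_1 = \tau \cap E_1$ and $\tau_2 = \tau - E_1$) together with Lemma~\ref{subT0topo}. The key observation underlying both directions is that every open set of $\tau$ is either a subset of $E_1$ (the members of $\tau_1$) or contains all of $E_1$ (the members of $\tau_2 + E_1$), so separation behaviour of $\tau$ decomposes cleanly according to where the two points $x,y$ live relative to $E_1$ and $E_2$.

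For the direction (i)~$\Rightarrow$~(ii): assuming $\tau$ is $T_0$, I would apply Proposition~\ref{taui} to write $\tau_1 = \tau \cap E_1$; since $E_1 \in \tau_1 \subseteq \tau$, part~(i) of Lemma~\ref{subT0topo} gives immediately that $\tau_1$ is $T_0$. For $\tau_2$, I would use $\tau_2 = \tau - E_1$ together with part~(iii) of Lemma~\ref{subT0topo}, again valid because $E_1 \in \tau$. So this direction is essentially a one-line consequence of the lemmas in the excerpt, once Proposition~\ref{taui} is invoked.

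For the direction (ii)~$\Rightarrow$~(i): assume $\tau_1$ and $\tau_2$ are $T_0$, and take two distinct points $x, y \in E_1 \cup E_2$. I would split into three cases. If $x, y \in E_1$, then by the $T_0$ property of $\tau_1$ there is $A \in \tau_1$ with $|\{x,y\} \cap A| = 1$, and $A \in \tau$ works directly. If $x, y \in E_2$, then by the $T_0$ property of $\tau_2$ there is $B \in \tau_2$ with $|\{x,y\} \cap B| = 1$; then $B \cup E_1 \in \tau_2 + E_1 \subseteq \tau$, and since $\{x,y\} \subseteq E_2$ is disjoint from $E_1$ we get $|\{x,y\} \cap (B \cup E_1)| = |\{x,y\} \cap B| = 1$. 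If $x \in E_1$ and $y \in E_2$ (or vice versa), then $\emptyset \in \tau_1 \subseteq \tau$ is an open set containing neither, while $E_1 \in \tau_1 \subseteq \tau$ contains $x$ but not $y$, so $|\{x,y\} \cap E_1| = 1$ and $E_1$ separates them. In all cases a separating open set exists, so $\tau$ is $T_0$.

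I do not expect a serious obstacle here: the main point is simply to organize the case analysis for the second implication correctly and to remember that $\emptyset$ and $E_1$ are always available open sets of $\tau$ for the "mixed" case. The only mild subtlety is making sure Proposition~\ref{taui} and Lemma~\ref{subT0topo} have their hypotheses met ($E_1 \in \tau$, which holds because $E_1 \in \tau_1$), after which both directions follow mechanically.
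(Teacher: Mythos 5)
Your proposal is correct and follows essentially the same route as the paper's own proof: the forward direction via Proposition~\ref{taui} combined with parts (i) and (iii) of Lemma~\ref{subT0topo}, and the converse via the same three-case analysis (both points in $E_1$, both in $E_2$ with the separating set lifted to $B \cup E_1$, and the mixed case separated by $E_1$ itself). Your write-up is if anything slightly more explicit about verifying $|\{x,y\}\cap(B\cup E_1)|=|\{x,y\}\cap B|$ in the second case, but there is no substantive difference.
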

\begin{proof}\
\\ {\bf (i) $\Rightarrow$ (ii):} Proposition \ref{taui} implies that $\tau_1 = \tau \cap E_1$ and $\tau_2 = \tau - E_1$. They are two $T_0$ topologies according to (i) and (iii) of Lemma \ref{subT0topo} when $\tau$ is a $T_0$ one.
\\ {\bf (ii) $\Rightarrow$  (i):} Since $\tau = \tau_1 \cup [E_1 + \tau_2]$, its largest open set is $E = E_1 \cup E_2$ and its smallest one is the empty set. Let $x, y \in E$ be two distinct points. If both belong to $E_1$ ($E_2$, respectively), then there exists an appropriate open set $X \in \tau_1$ or ($Y \in \tau_2$, i.e., $Y \cup E_1 \in \tau_2 + E_1$, respectively) that distinguish them. Now if $x \in E_1$ and $y \in E_2$, then $y \notin E_1$, and $E_1 \in \tau_1 \subseteq \tau$ is the right open set since $E_1 \cap E_2 = \O$.
\end{proof}

The main result of this section is

\begin{theorem}\label{Wsum}\
  \\ (1) $\tau = \tau_1 \oplus_w \tau_2$ is a topology defined on $E_1 \cup E_2$, that is, a topology on $|E_1| + |E_2|$ points.
  \\ (2) If $\tau_1$ and $\tau_2$ are $T_0$ topologies, then so is $\tau$.
\end{theorem}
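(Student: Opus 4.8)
The plan for (1) is to check the three topology axioms (w1)--(w3) of Definition \ref{Topo} directly for $\tau = \tau_1 \cup (\tau_2 + E_1)$. Axiom (w1) is immediate: $\O \in \tau_1 \subseteq \tau$, and since $E_2 \in \tau_2$ we get $E_1 \cup E_2 = E_2 \cup E_1 \in \tau_2 + E_1 \subseteq \tau$, so both the empty set and the ground set $E_1 \cup E_2$ lie in $\tau$. For (w2) and (w3) I would take $A, B \in \tau$ and split into three cases according to whether each of $A, B$ lies in $\tau_1$ or in $\tau_2 + E_1$. If both lie in $\tau_1$, then $A \cup B, A \cap B \in \tau_1 \subseteq \tau$ since $\tau_1$ is a topology. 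If both lie in $\tau_2 + E_1$, then $A \cup B, A \cap B \in \tau_2 + E_1 \subseteq \tau$ by Lemma \ref{subWtopo}(iv), which says $\tau_2 + E_1$ is closed under union and intersection.

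The only case with any content is the mixed one: say $A \in \tau_1$, so $A \subseteq E_1$, and $B = O \cup E_1$ with $O \in \tau_2$, so $O \subseteq E_2$. Using $E_1 \cap E_2 = \O$ one computes $A \cup B = A \cup O \cup E_1 = O \cup E_1 = B \in \tau_2 + E_1$, and $A \cap B = A \cap (O \cup E_1) = (A \cap O) \cup (A \cap E_1) = \O \cup A = A \in \tau_1$; in both cases the result is in $\tau$. This exhausts the cases, so $\tau$ is a topology on $E_1 \cup E_2$, which is a set of $|E_1| + |E_2|$ points, proving (1). Statement (2) then follows with no further work: once $\tau$ is known to be a topology, Lemma \ref{sumw} applies, and the implication (ii) $\Rightarrow$ (i) of that lemma gives that $\tau$ is $T_0$ whenever $\tau_1$ and $\tau_2$ are.

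I do not expect a genuine obstacle here; the argument is essentially bookkeeping. The one place to be careful is the mixed case of (w2)/(w3), where the disjointness $E_1 \cap E_2 = \O$ must be invoked explicitly (it is what collapses $A \cup E_1$ to $E_1$ and $A \cap O$ to $\O$); everything else is packaged in Lemma \ref{subWtopo}(iv) and Lemma \ref{sumw}. It is also worth noting in passing that $\tau_1$ and $\tau_2 + E_1$ overlap (both contain $E_1$, via $\O \in \tau_2$), but this causes no difficulty since $\tau$ is defined as a union of collections.
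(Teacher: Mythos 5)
Your proposal is correct and follows essentially the same route as the paper: reduce to the mixed case $A \in \tau_1$, $B \in \tau_2 + E_1$ (the other two cases being handled by $\tau_1$ being a topology and by Lemma \ref{subWtopo}(iv)), observe that $A \subseteq E_1 \subseteq B$ forces $A \cup B = B$ and $A \cap B = A$, and invoke Lemma \ref{sumw} for part (2). Your write-up is somewhat more explicit than the paper's (which compresses the case analysis into a ``without loss of generality''), but there is no substantive difference.
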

\begin{proof}\
  \\ (1) Let $A, B \in \tau$. Without loss of generality, we can suppose that $A \in \tau_1$, and $B \in \tau_2 + E_1$ because the latter is closed under union and intersection according to (iv) of Lemma \ref{subWtopo}. In this case,
  $$ B = O_2 \cup E_1 \supseteq A \eqno (1), $$
  for some $O_2 \in \tau_2$. Hence, $A \cup B = B \in \tau$, and $A \cap B = A \in \tau$. In other words, $\tau$ is a topology on $E_1 \cup E_2 \in E_1 + \tau_2$ since $\O \in \tau_1 \subseteq \tau$. So, its size is $|E_1 \cup E_2| = |E_1| + |E_2|$, and we are done.
  \\ (2) is a part of Lemma \ref{sumw}.
\end{proof}

\section{Main result}

First, we prove that intersection-topologies and shift topologies keep homeomorphism.

\begin{lemma}\label{subtopoIsomorphism}
  Let $\tau_i$ be two homeomorphic topologies, and $X_i \in \tau_i$, $i = 1, 2$, two homeomorphic open sets. Then
  \\ (i) $\tau_1 \cap X_1$ and $\tau_2 \cap X_2$ are homeomorphic.
  \\ (ii) $\tau_1^C$ and $\tau_2^C$ are homeomorphic.
  \\ (iii) $\tau_1 - X_1$ and $\tau_2 - X_2$ are homeomorphic.
\end{lemma}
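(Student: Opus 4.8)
The plan is to fix a homeomorphism $\varphi : E_1 \to E_2$ realizing the homeomorphism between $\tau_1$ and $\tau_2$ and, using the hypothesis that $X_1$ and $X_2$ are homeomorphic open sets, chosen so that $\varphi(X_1) = X_2$; all three parts are then obtained by exhibiting an explicit homeomorphism built from $\varphi$, invoking Proposition \ref{cardHomeo} (homeomorphisms preserve inclusion, intersection, union, and complement) at each step.

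For (i), I would take $\psi = \varphi|_{X_1}$. Since $\varphi$ is a bijection $E_1 \to E_2$ with $\varphi(X_1) = X_2$, the map $\psi$ is a bijection from $X_1$ onto $X_2$, so condition (1) of Definition \ref{TopoIso} holds. For condition (2): given $A \in \tau_1 \cap X_1$, write $A = O \cap X_1$ with $O \in \tau_1$; then $\psi(A) = \varphi(O \cap X_1) = \varphi(O) \cap \varphi(X_1) = \varphi(O) \cap X_2 \in \tau_2 \cap X_2$, because $\varphi(O) \in \tau_2$ and $\varphi$ preserves intersection. The reverse implication is symmetric, applied to $\varphi^{-1}$, which is again a homeomorphism. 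Hence $\psi$ is a homeomorphism from $\tau_1 \cap X_1$ onto $\tau_2 \cap X_2$.

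For (ii), I would check that $\varphi$ itself is a homeomorphism from $\tau_1^C$ to $\tau_2^C$. Since $\varphi$ preserves complements, $\varphi(E_1 \backslash X) = E_2 \backslash \varphi(X)$ for every $X \subseteq E_1$, so
\[
\varphi(\tau_1^C) = \{\, \varphi(E_1 \backslash X) : X \in \tau_1 \,\} = \{\, E_2 \backslash \varphi(X) : X \in \tau_1 \,\} = \{\, E_2 \backslash Y : Y \in \tau_2 \,\} = \tau_2^C,
\]
using that $\varphi$ maps $\tau_1$ bijectively onto $\tau_2$; the "if and only if" of Definition \ref{TopoIso} then follows because $\varphi$ is a bijection. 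For (iii), I would combine (i) and (ii) with the identity $\tau_i - X_i = [\tau_i^C \cap (E_i \backslash X_i)]^C$ from Lemma \ref{subWtopo}(ii): by (ii), $\varphi$ is a homeomorphism $\tau_1^C \to \tau_2^C$, and since $\varphi(E_1 \backslash X_1) = E_2 \backslash X_2$, the open sets $E_1 \backslash X_1 \in \tau_1^C$ and $E_2 \backslash X_2 \in \tau_2^C$ are homeomorphic; applying (i) shows $\tau_1^C \cap (E_1 \backslash X_1)$ and $\tau_2^C \cap (E_2 \backslash X_2)$ are homeomorphic, and applying (ii) once more to their co-topologies gives the claim.

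The only delicate point is the first one: interpreting "two homeomorphic open sets" as the statement that a single ambient homeomorphism $\varphi$ can be taken with $\varphi(X_1) = X_2$, and then carrying out the (routine but necessary) verification that $\varphi|_{X_1}$ remains a bijection onto $X_2$ and that condition (2) of Definition \ref{TopoIso} holds in both directions. Once (i) and (ii) are in place, part (iii) is pure bookkeeping with the identity from Lemma \ref{subWtopo}.
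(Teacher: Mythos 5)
Your proposal is correct and follows essentially the same route as the paper: restrict the ambient homeomorphism $\varphi$ to $X_1$ for (i), use preservation of complements for (ii), and combine these with Lemma \ref{subWtopo}(ii) for (iii). If anything, you are more careful than the paper on the one delicate point — the paper infers that $\varphi|_{X_1}$ is a bijection onto $X_2$ merely from $|X_1|=|X_2|$, whereas you correctly identify that one must choose (or arrange) $\varphi$ with $\varphi(X_1)=X_2$.
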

\begin{proof}\
  \\ Let $\varphi$ be a homeomorphism from $\tau_1$ to $\tau_2$. This means that $\varphi$ is bijective from $E_1$ to $E_2$ and $\varphi (Y_1) \in \tau_2$ if and only if $Y_1 \in \tau_1$.
  \\ (i) Now let $\varphi_{X_1}$ the restriction of $\varphi$ to $X_1 \subseteq E_1$. Since $X_1$ and $X_2$ are homeomorphic, i.e., they have the same cardinality, $\varphi_{X_1}$ is bijective from $X_1$ to $X_2$. Inclusions are kept by $\varphi$ yield $\tau_1 \cap X_1$ and $\tau_2 \cap X_2$ are homeomorphic.
  \\ (ii) For the co-topologies, consider the same bijective mapping $\varphi$ from $E_1$ to $E_2$. Since an homeomorphism keeps intersection, union, and complement, $\varphi (E_1 \backslash O_1) = \varphi (E_1 \cap (E_1 \backslash O_1)) =  \varphi (E_1) \cap \varphi (E_1 \backslash O_1) = E_2 \cap [\varphi (O_1)]^C = E_2 \cap (E_2 \backslash O_2) = E_2 \backslash O_2 \in \tau_2^C$ if and only if $E_1 \backslash O_1 \in \tau_1^C$.
  \\ (iii) (i)-(ii) of the current lemma and (ii) of Lemma \ref{subWtopo} permits us to conclude.
\end{proof}

We denote by $A \times B$ the classical cartesian product for two sets $A$ and $B$. We introduce the following mapping $\varphi$ from $\mathcal T(n) \times \mathcal T(m)$ to $\mathcal T(n + m)$ as follows. For any $(\tau_1, \tau_2) \in \mathcal T(n) \times \mathcal T(m)$, $\varphi (\tau_1, \tau_2) = \tau_1 \oplus_w \tau_2$

\begin{theorem}\label{WsumInjective}
  $\varphi$ is an injective mapping.
\end{theorem}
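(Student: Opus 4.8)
The plan is to show that, from a single topology $\tau = \tau_1 \oplus_w \tau_2$, one can recover the ordered pair $(\tau_1,\tau_2)$ up to homeomorphism, and that this recovery depends only on the homeomorphism class of $\tau$; injectivity of $\varphi$ is then immediate. The heart of the matter is a canonical description of $E_1$ inside $\tau$. Write $n = |E_1|$. Every open set $O$ of $\tau = \tau_1 \cup (\tau_2 + E_1)$ falls into one of two cases: either $O \in \tau_1$, so $O \subseteq E_1$ and $|O| \leq n$ with equality only when $O = E_1$; or $O = O_2 \cup E_1$ with $O_2 \in \tau_2$, and since $E_1 \cap E_2 = \O$ we get $|O| = |O_2| + n \geq n$ with equality only when $O_2 = \O$, i.e. $O = E_1$. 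Hence \emph{$E_1$ is the unique open set of $\tau$ of cardinality $n$}. I would record this as a short lemma; note the two cases are consistent because $\tau_1 \cap (\tau_2 + E_1) = \{E_1\}$.

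Granting this, the argument runs as follows. First, $\varphi$ is well defined on homeomorphism classes: gluing homeomorphisms $\varphi_i : \tau_i \to \tau_i'$ ($i=1,2$) along the disjoint unions yields a bijection that carries $\tau_1 \oplus_w \tau_2$ onto $\tau_1' \oplus_w \tau_2'$, checked on the subfamily $\tau_1$ and on the subfamily $\tau_2 + E_1$ separately (they agree on $\{E_1\}$). For injectivity, suppose $\psi$ is a homeomorphism from $\tau = \tau_1 \oplus_w \tau_2$ (with $|E_1| = n$) onto $\tau' = \tau_1' \oplus_w \tau_2'$ (with $|E_1'| = n$). By Proposition \ref{cardHomeo}, $\psi$ preserves cardinalities, so $\psi(E_1)$ is an open set of $\tau'$ of cardinality $n$; by the lemma, $\psi(E_1) = E_1'$. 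Thus $\psi$, $E_1$ and $E_1'$ satisfy the hypotheses of Lemma \ref{subtopoIsomorphism}, giving $\tau \cap E_1 \cong \tau' \cap E_1'$ and $\tau - E_1 \cong \tau' - E_1'$. Since $\tau$ and $\tau'$ are topologies by Theorem \ref{Wsum}(1), Proposition \ref{taui} identifies $\tau \cap E_1 = \tau_1$, $\tau - E_1 = \tau_2$, $\tau' \cap E_1' = \tau_1'$ and $\tau' - E_1' = \tau_2'$. Therefore $\tau_1 \cong \tau_1'$ and $\tau_2 \cong \tau_2'$, i.e. $(\tau_1,\tau_2)$ and $(\tau_1',\tau_2')$ define the same element of $\mathcal T(n) \times \mathcal T(m)$, as injectivity requires. (The non‑commutativity of $\oplus_w$ is what makes the ordered pair recoverable: $E_1$ is the ``lower'' ground set and is pinned down by the lemma, so the roles of the two summands cannot be swapped.)

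The only genuinely new content is the cardinality lemma, and the step deserving care is precisely its ``upper'' case: no open set of $\tau_2 + E_1$ other than $E_1$ has cardinality $n$, which is exactly where disjointness of $E_1$ and $E_2$ enters. Everything else is bookkeeping that invokes Proposition \ref{cardHomeo}, Lemma \ref{subtopoIsomorphism}, Proposition \ref{taui} and Theorem \ref{Wsum}. The degenerate cases $n = 0$ and $m = 0$ are harmless: then one summand is the unique topology on $\O$, and $E_1$ is respectively $\O$ or the whole ground set, still the unique open set of cardinality $n$.
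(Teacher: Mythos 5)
Your proof is correct and follows the same overall route as the paper: recover $\tau_1$ and $\tau_2$ from $\tau$ as $\tau\cap E_1$ and $\tau-E_1$ (Proposition \ref{taui}) and transport the homeomorphism through Lemma \ref{subtopoIsomorphism}. The one substantive addition is your cardinality lemma --- that $E_1$ is the \emph{unique} open set of $\tau_1\oplus_w\tau_2$ of cardinality $n$, so any homeomorphism $\psi:\tau\to\tau'$ must send $E_1$ to $E_1'$. The paper's own proof silently applies Lemma \ref{subtopoIsomorphism} to the pairs $(E_{n1},F_{n1})$ without checking that these open sets correspond under the given homeomorphism, which is exactly the hypothesis that lemma needs; your lemma supplies the missing justification, and its verification (both the ``$\leq n$ with equality only for $E_1$'' case in $\tau_1$ and the ``$\geq n$ with equality only for $E_1$'' case in $\tau_2+E_1$, the latter using disjointness of $E_1$ and $E_2$) is sound. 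Your explicit check that $\varphi$ is well defined on homeomorphism classes is likewise a point the paper waves at but does not argue. In short: same strategy, but your version closes a real gap in the published argument.
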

\begin{proof}\
  \\ First, it is clear that $\varphi$ is well defined according to Theorem \ref{Wsum}. To prove its injectivity, consider $\tau_{ij} \in \mathcal T(i)$, $i = n, m$, and $\tau_j = \tau_{nj} \oplus_w \tau_{mj}$, $j = 1, 2$. Suppose that $\tau_1$ and $\tau_2$ are homeomorphic, and the ground sets of $\tau_{i1}$ are $E_{i1}$, $i = n, m$, respectively, while those of $\tau_{j2}$ are $F_{j1}$, $j = n, m$, respectively. Theorem \ref{Wsum} and (i)-(iii) of Lemma \ref{subtopoIsomorphism} imply that:
  \\ (i) $\tau_{n1} = \tau_1 \cap E_{n1}$ and $\tau_{n2} = \tau_2 \cap F_{n1}$ are homeomorphic.
  \\ (ii) $\tau_{m1} = \tau_1 - E_{n1}$ and $\tau_{m2} = \tau_2 - F_{n1}$ are homeomorphic.
  \\ Thus, $\varphi$ is injective.
\end{proof}

\begin{corollary}\label{Inequality}\
  \\ (i) $f(n + m) \geq f(n) f(m)$.
  \\ (ii) $f(n + m) \geq \max \{ f(i) f(m + n - i)$ for all $1 \leq i \leq n + m - 1 \}$.
\end{corollary}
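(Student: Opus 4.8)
The plan is to read the corollary off directly as a counting consequence of the injectivity established in Theorem~\ref{WsumInjective}. By construction, $\mathcal T(k)$ is a complete set of representatives for the homeomorphism classes of topologies on $k$ points, so $|\mathcal T(k)| = f(k)$ and hence $|\mathcal T(n)\times\mathcal T(m)| = f(n)f(m)$. Theorem~\ref{WsumInjective} supplies an injection $\varphi\colon \mathcal T(n)\times\mathcal T(m)\to\mathcal T(n+m)$, and an injection between finite sets cannot strictly decrease cardinality; therefore $f(n+m)=|\mathcal T(n+m)|\ge |\mathcal T(n)\times\mathcal T(m)| = f(n)f(m)$, which is part~(i).

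For part~(ii) I would simply specialize part~(i). Given any integer $i$ with $1\le i\le n+m-1$, write $n+m = i + (n+m-i)$ as a sum of two positive integers and apply part~(i) to this decomposition, obtaining $f(n+m)\ge f(i)f(n+m-i)$. Letting $i$ range over $\{1,\dots,n+m-1\}$ produces a family of lower bounds for the single number $f(n+m)$, so $f(n+m)$ is at least the maximum of that family, which is exactly the asserted inequality.

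There is essentially no obstacle left at this point, since the genuine work was done beforehand; the corollary is a pure cardinality count once the injection is in hand. The one subtlety worth flagging is that the $w$-sum must descend to a well-defined injective map on homeomorphism classes: one needs both that homeomorphic summands give homeomorphic $w$-sums, so that $\varphi$ is well defined on $\mathcal T(n)\times\mathcal T(m)$, and that homeomorphic $w$-sums force the summands to be homeomorphic. The latter is Theorem~\ref{WsumInjective}, which recovers the summands from $\tau$ as $\tau\cap E_1$ and $\tau - E_1$ via Proposition~\ref{taui}; the former follows from Lemma~\ref{subtopoIsomorphism} applied to those same recovery operations. I would also note in passing that the identical argument run with the ordinary direct sum, which is likewise injective on labeled topologies, gives the labeled analogue $T(n+m)\ge T(n)T(m)$, and that confining $\varphi$ to $T_0$ classes via Theorem~\ref{Wsum}(2) and Lemma~\ref{sumw} yields the corresponding $T_0$ statements $f_0(n+m)\ge f_0(n)f_0(m)$ and $T_0(n+m)\ge T_0(n)T_0(m)$.
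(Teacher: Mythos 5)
Your proposal is correct and follows exactly the route the paper intends: the corollary is stated without proof as an immediate cardinality consequence of the injection $\varphi$ from Theorem~\ref{WsumInjective}, and part~(ii) is just part~(i) applied to every decomposition $n+m = i + (n+m-i)$. Your added remark about well-definedness of $\varphi$ on homeomorphism classes (via Lemma~\ref{subtopoIsomorphism}) is a worthwhile point that the paper glosses over, but it does not change the argument.
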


\begin{corollary}\label{labeledInequality}\
  \\ (i) $T(n + m) \geq T(n) T(m)$.
  \\ (ii) $T(n + m) \geq \max \{ T(i) T(m + n - i)$ for all $1 \leq i \leq n + m - 1 \}$.
\end{corollary}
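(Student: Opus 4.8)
The plan is to run, for the labeled case, the same argument that yields Corollary~\ref{Inequality}, but with homeomorphisms replaced by literal equality of collections of sets; this actually makes the labeled statement the easier of the two. First I would fix two disjoint finite sets $E_1$ and $E_2$ with $|E_1| = n$ and $|E_2| = m$, so that $|E_1 \cup E_2| = n + m$. Write $\mathcal L(E)$ for the family of all topologies whose ground set is exactly $E$; since the number of labeled topologies depends only on the cardinality of the ground set, $|\mathcal L(E_1)| = T(n)$, $|\mathcal L(E_2)| = T(m)$, and $|\mathcal L(E_1 \cup E_2)| = T(n + m)$.

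Next I would introduce the map $\psi \colon \mathcal L(E_1) \times \mathcal L(E_2) \to \mathcal L(E_1 \cup E_2)$ given by $\psi(\tau_1, \tau_2) = \tau_1 \oplus_w \tau_2$. Theorem~\ref{Wsum}(1) guarantees that $\tau_1 \oplus_w \tau_2$ is a topology on $E_1 \cup E_2$, so $\psi$ is well defined. To see that $\psi$ is injective, suppose $\tau_1 \oplus_w \tau_2 = \sigma_1 \oplus_w \sigma_2 =: \tau$ with $\tau_1, \sigma_1 \in \mathcal L(E_1)$ and $\tau_2, \sigma_2 \in \mathcal L(E_2)$. Proposition~\ref{taui} applies to each decomposition, and gives $\tau_1 = \tau \cap E_1 = \sigma_1$ and $\tau_2 = \tau - E_1 = \sigma_2$. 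In contrast with Theorem~\ref{WsumInjective}, no appeal to Lemma~\ref{subtopoIsomorphism} is needed here: the ground sets $E_1$ and $E_2$ are fixed and distinguishable, so recovering the summands is a matter of equality, not of homeomorphism. For the same reason the naive commutative direct sum $\oplus$ would work equally well in the labeled case, since $\tau_1$ and $\tau_2$ can be read off $\tau$ as $\{\,O \in \tau : O \subseteq E_1\,\}$ and $\{\,O \in \tau : O \subseteq E_2\,\}$; I would use $\oplus_w$ only to keep the argument parallel to the unlabeled one.

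From the injectivity of $\psi$ I would conclude $T(n + m) = |\mathcal L(E_1 \cup E_2)| \geq |\mathcal L(E_1) \times \mathcal L(E_2)| = T(n)\,T(m)$, which is part~(i). For part~(ii), fix any integer $i$ with $1 \leq i \leq n + m - 1$ and apply part~(i) to the decomposition $n + m = i + (n + m - i)$, whose two summands are both positive; this gives $T(n + m) \geq T(i)\,T(n + m - i)$, and taking the maximum over all admissible $i$ finishes the proof.

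I do not expect a genuine obstacle. The only point needing a little care is the bookkeeping in the first paragraph, namely that $T(\cdot)$ counts topologies on a \emph{fixed} ground set of the prescribed size, so that splitting an $(n+m)$-point set once and for all into the distinguished pieces $E_1$ and $E_2$ is legitimate and makes the recovery map $\tau \mapsto (\tau \cap E_1,\ \tau - E_1)$ well defined; the degenerate cases $n = 0$ or $m = 0$ are harmless since then the relevant factor of $T$ equals $1$. Everything else is an immediate transcription of Theorem~\ref{Wsum}, Proposition~\ref{taui}, and the argument behind Corollary~\ref{Inequality}.
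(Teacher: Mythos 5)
Your proof is correct and follows essentially the route the paper intends: the corollary is stated without an explicit proof, but it is meant to follow from the injectivity of the $w$-sum map on topologies with fixed disjoint ground sets $E_1$ and $E_2$, exactly as you argue via Proposition~\ref{taui}. Your side remark that the ordinary direct sum already suffices in the labeled case also matches the paper's own observation in the introduction, so there is nothing further to add.
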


Now, we give the corresponding inequalities for $T_0$ topologies.

\begin{corollary}\label{Inequality}\
  \\ (i) $f_0(n + m) \geq f_0(n) f_0(m)$.
  \\ (ii) $f_0(n + m) \geq \max \{ f_0(i) f_0(m + n - i)$ for all $1 \leq i \leq n + m - 1 \}$.
\end{corollary}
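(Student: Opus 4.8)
The plan is to run the $T_0$ analogue of the argument that derives the arbitrary-topology inequality from Theorems \ref{Wsum} and \ref{WsumInjective}. Write $\mathcal T_0(k)$ for the class of unlabeled $T_0$ topologies on $k$ points, so that $|\mathcal T_0(k)| = f_0(k)$ and $\mathcal T_0(k) \subseteq \mathcal T(k)$. The first step is to observe that the $w$-sum map $\varphi$ restricts to a well-defined map $\varphi_0 \colon \mathcal T_0(n) \times \mathcal T_0(m) \to \mathcal T_0(n+m)$: well-definedness on homeomorphism classes is handled exactly as for $\varphi$, and the fact that the image consists of $T_0$ topologies is precisely Theorem \ref{Wsum}(2) (equivalently the implication (ii)$\Rightarrow$(i) of Lemma \ref{sumw}).

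The second step is injectivity of $\varphi_0$. The cleanest way is to note that $\varphi_0$ is a restriction of $\varphi$, which is injective by Theorem \ref{WsumInjective}, hence $\varphi_0$ is injective as well. If instead one wants a self-contained argument, one repeats the proof of Theorem \ref{WsumInjective}: start from $T_0$ topologies $\tau = \tau_1 \oplus_w \tau_2$ and $\tau' = \tau_1' \oplus_w \tau_2'$ with $\tau \cong \tau'$, recover the summands via Proposition \ref{taui} as $\tau_1 = \tau \cap E_1$, $\tau_2 = \tau - E_1$ (and similarly for $\tau'$), apply Lemma \ref{subtopoIsomorphism}(i),(iii) to obtain $\tau_1 \cong \tau_1'$ and $\tau_2 \cong \tau_2'$, and invoke Lemma \ref{subT0topo}(i),(iii) to see that these summands are genuinely $T_0$, hence represent equal elements of $\mathcal T_0(n)$ and $\mathcal T_0(m)$.

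The third step is the counting. An injection $\mathcal T_0(n) \times \mathcal T_0(m) \hookrightarrow \mathcal T_0(n+m)$ gives $f_0(n+m) = |\mathcal T_0(n+m)| \geq |\mathcal T_0(n)|\,|\mathcal T_0(m)| = f_0(n) f_0(m)$, which is (i); part (ii) then follows by applying (i) to every decomposition $n+m = i + (n+m-i)$ with $1 \leq i \leq n+m-1$ and taking the maximum over $i$ (the index range is nonempty as soon as $n+m \geq 2$, while $n+m \leq 1$ is trivial since $f_0(0) = f_0(1) = 1$).

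I do not expect a genuine obstacle here, since all the needed structural facts — stability of the $T_0$ property under $\tau \cap X$, $\tau - X$, co-topology, and $w$-sum, together with stability of homeomorphism under $\tau \cap X$ and $\tau - X$ — are already established in Lemmas \ref{subT0topo}, \ref{sumw}, \ref{subtopoIsomorphism} and Theorem \ref{Wsum}. The one point I would be careful about is the well-definedness of $\varphi_0$ (and indeed of $\varphi$ itself) on homeomorphism classes, i.e. the converse of Lemma \ref{subtopoIsomorphism}: homeomorphic summands produce homeomorphic $w$-sums. This is a short gluing argument — combine the two homeomorphisms on the disjoint ground sets into a single bijection and check, using Proposition \ref{cardHomeo}, that it carries $\tau_1 \oplus_w \tau_2$ onto $\tau_1' \oplus_w \tau_2'$ — but it is the only step not already packaged as a cited lemma.
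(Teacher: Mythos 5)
Your argument is correct and is exactly the derivation the paper intends: the corollary is stated without proof, the implicit justification being that $\varphi$ restricted to $T_0$ classes lands in $\mathcal T_0(n+m)$ by Theorem \ref{Wsum}(2) and stays injective by Theorem \ref{WsumInjective}, giving $f_0(n+m)\geq f_0(n)f_0(m)$ and hence (ii) by varying the decomposition. Your closing observation about well-definedness on homeomorphism classes (homeomorphic summands yield homeomorphic $w$-sums) identifies a step the paper indeed leaves implicit, but your proposed gluing argument fills it correctly.
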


\begin{corollary}\label{labeledInequality}\
  \\ (i) $T_0(n + m) \geq T_0(n) T_0(m)$.
  \\ (ii) $T_0(n + m) \geq \max \{ T_0(i) T_0(m + n - i)$ for all $1 \leq i \leq n + m - 1 \}$.
\end{corollary}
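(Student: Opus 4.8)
The plan is to run the same $w$-sum argument used in the unlabeled case, but in the labeled setting, where it is in fact simpler because no homeomorphism bookkeeping is needed. Fix two disjoint ground sets, say $E_1 = \{1, \dots, n\}$ and $E_2 = \{n+1, \dots, n+m\}$, so that $E_1 \cup E_2 = \{1, \dots, n+m\}$. Let $\mathcal{T}_0(E)$ denote the set of labeled $T_0$ topologies on a finite set $E$; since relabeling is a bijection, $|\mathcal{T}_0(E)|$ depends only on $|E|$, and equals $T_0(|E|)$. Define a map $\psi \colon \mathcal{T}_0(E_1) \times \mathcal{T}_0(E_2) \to \mathcal{T}_0(E_1 \cup E_2)$ by $\psi(\tau_1, \tau_2) = \tau_1 \oplus_w \tau_2$.

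First I would check that $\psi$ is well defined: by part (1) of Theorem \ref{Wsum}, $\tau_1 \oplus_w \tau_2$ is a topology on $E_1 \cup E_2$, and by part (2) of the same theorem it is $T_0$ whenever $\tau_1$ and $\tau_2$ are. Next I would prove that $\psi$ is injective. Suppose $\psi(\tau_1, \tau_2) = \psi(\tau_1', \tau_2') =: \tau$. By Proposition \ref{taui}, $\tau_1 = \tau \cap E_1 = \tau_1'$ and $\tau_2 = \tau - E_1 = \tau_2'$, hence $(\tau_1, \tau_2) = (\tau_1', \tau_2')$. This is the labeled counterpart of Theorem \ref{WsumInjective}; because the ground sets are held fixed, the appeal to Lemma \ref{subtopoIsomorphism} is not needed here. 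Counting both sides of the injection then yields $T_0(n+m) = |\mathcal{T}_0(E_1 \cup E_2)| \geq |\mathcal{T}_0(E_1) \times \mathcal{T}_0(E_2)| = T_0(n)\, T_0(m)$, which is part (i).

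For part (ii), I would apply part (i) to each splitting $n + m = i + \big((n+m) - i\big)$ with $1 \leq i \leq n+m-1$, obtaining $T_0(n+m) \geq T_0(i)\, T_0(n+m-i)$ for every such $i$, and then take the maximum over $i$. The only point requiring a little care is the observation that the number of labeled $T_0$ topologies on a set depends only on its cardinality, so that the domain of $\psi$ genuinely has size $T_0(n)\, T_0(m)$; beyond that there is no real obstacle, since all the structural content — that the $w$-sum of two $T_0$ topologies is again a $T_0$ topology, and that the two summands are recoverable as $\tau \cap E_1$ and $\tau - E_1$ — has already been established in Theorem \ref{Wsum} and Proposition \ref{taui}.
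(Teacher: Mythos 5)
Your argument is correct and follows the paper's intended route: the paper states this corollary as an immediate consequence of the $w$-sum machinery (Theorem \ref{Wsum}, Proposition \ref{taui}), and you have simply written out the labeled version of the injection, correctly noting that injectivity here needs only Proposition \ref{taui} and not the homeomorphism bookkeeping of Lemma \ref{subtopoIsomorphism}. Nothing is missing.
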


\section{Conclusion}

We have proved a super-multiplicative inequality for the number of finite topologies and $T_0$ ones in the labeled and unlabeled cases. While our lower bounds may not be sharp due to the use of simple formulas, our goal was to prove a similar non-trivial inequality as for matroids. To enhance our understanding, we can focus on exploring a wider variety of configurations for topologies on $n + m$ points, starting from two distinct topologies on $n$ and $m$ points. On the other hand, values of $T(n)$ and $f(n)$ are known for $n \leq 18$ and $n \leq 16$, respectively. So, our inequality gives lower bounds for unknown values of $T(n)$ and $f(n)$ when $19 \leq n \leq 35$ and $17 \leq n \leq 31$, respectively (and also for $T_0(n)$ and $f_0(n)$). Moreover, since $T_0$ topologies are in one-to-one correspondence with posets, we have a similar inequality for the number of non-isomorphic posets. Further investigations can be refining the obtained inequality by introducing a summation of products instead of a product only.

\end{document}